\DeclareMathOperator*{\argmin}{argmin}
\newcommand{\set}[1]{\left\{#1\right\}}
\newcommand{\goesto}{\rightarrow}
\newcommand{\Bcal}{\mathcal{B}}
\newcommand{\Dcal}{\mathcal{D}}
\newcommand{\Jcal}{\mathcal{J}}
\newcommand{\Scal}{\mathcal{S}}
\newcommand{\Vcal}{\mathcal{V}}
\newcommand{\Xcal}{\mathcal{X}}
\newcommand{\rbar}{\overline{r}}
\newcommand{\Rbb}{\mathbb{R}}
\newcommand{\Zbb}{\mathbb{Z}}
\newcommand{\E}{\mathbb{E}}
\newcommand{\Ind}{\mathbb{I}}
\newtheorem{definition}{Definition}
\newtheorem{proposition}{Proposition}
\newtheorem{theorem}{Theorem}
\newtheorem{lemma}{Lemma}
\title{\LARGE \bf Service Rate Control For Jobs with Decaying Value}
\author{Neal Master and Nicholas Bambos
  \thanks{Neal Master is supported by the Department of Defense (DoD)
    through the National Defense Science \& Engineering Graduate
    Fellowship (NDSEG) Program.}
  \thanks{N. Master and N. Bambos are with the Department of
    Electrical Engineering, Stanford University, Stanford, CA, 94305,
    USA.  {\tt\small\{nmaster, bambos\}@stanford.edu}} }
\begin{document}
\maketitle
\thispagestyle{empty}
\pagestyle{empty}
\begin{abstract}
  The task of completing jobs with decaying value arises in a number
  of application areas including healthcare operations, communications
  engineering, and perishable inventory control. We consider a system
  in which a single server completes a finite sequence of jobs in
  discrete time while a controller dynamically adjusts the service
  rate. During service, the value of the job decays so that a greater
  reward is received for having shorter service times. We incorporate
  a non-decreasing cost for holding jobs and a non-decreasing cost on
  the service rate. The controller aims to minimize the total cost of
  servicing the set of jobs. We show that the optimal policy is
  non-decreasing in the number of jobs remaining -- when there are
  more jobs in the system the controller should use a higher service
  rate. The optimal policy does not necessarily vary monotonically
  with the residual job value, but we give algebraic conditions which
  can be used to determine when it does. These conditions are then
  simplified in the case that the reward for completion is constant
  when the job has positive value and zero otherwise. These algebraic
  conditions are interesting because they can be verified without
  using algorithms like value iteration and policy iteration to
  explicitly compute the optimal policy. We also discuss some future
  modeling extensions.
\end{abstract}

\section{Introduction\label{sec:intro}}
There are a variety of queueing applications for which job completion
rewards decay over time. For example, this is the case in healthcare
systems. In some situations, the patients can be treated like ``jobs''
and the decaying ``reward'' is the decaying patient health --
patients' health will typically decay as treatment is delayed and this
can reduce the efficacy of medical procedures
\cite{Mcquillan_ICU_1998}. Jobs can also represent diagnostic tests. A
study showed that a majority of primary care physicians were
dissatisfied with delays in viewing test results and that these delays
can lead to further delays in treatment
\cite{Poon_MedicalTests_2004}. The negative impact of patient
mortality motivates the general study of queueing for jobs with
decaying value.

There are also applications in communications engineering. A notable
example is that of multimedia streaming over wireless. Each packet is
a job which is completed when the packet is successfully transmitted
over a noisy channel. For the sake of maintaining a high quality user
experience, multimedia traffic requires low latency as well as low
jitter.  The real-time nature of streaming means that the packets
rapidly decay to having zero value. This has led to a number of
interesting practical and theoretical problems in the wireless
communications literature. One key problem is that of packet
scheduling for downlink cellular systems. In these systems, cellular
base-stations need to schedule many different traffic streams while
taking into account channel conditions in order to maintain high
quality-of-service (QoS) for all users \cite{Dua_CD2_2010}. In other
contexts, delay sensitive service becomes relevant for transmitter
power control with constraints on inter-departure times
\cite{Master_HOL_2014}. Higher transmitter power gives a higher
probability of successful packet transmission so there is a natural
trade-off between power usage and delay.

A third application area is that of perishable inventory control. Food
items can be modeled as ``jobs'' while the process of selling to
consumers can be modeled as ``service''. For example, food items will
decay with time as they eventually spoil, at which point they have no
value. In these models, the value of food items will decay differently
under varying storage and service conditions giving rise to many
scheduling and service rate control problems. See
\cite{Nahmias_Perishable_1982} for a survey.

Aside from applications oriented research, there is a considerable
body of theoretical work geared towards queueing systems for jobs with
decaying value. In \cite{Dalal_Impatient_2005}, ``impatient'' users in
an M/M/1 queue are scheduled under the constraint that the rewards for
servicing each user decay exponentially. Stochastic depletion problems
cover a broad range of preemptive scheduling problems in which items
are processed while the rewards for doing so decay over time. In
\cite{Chan_Depletion_2009}, greedy scheduling policies for such
problems are shown to be suboptimal by no more than a factor of 2.

In this paper, we consider the following type of system: A finite set
of identical jobs are sequentially serviced by a single server in
discrete time. The controller chooses the probability that the current
head-of-line (HOL) job will reach completion in the current time
slot. When a job reaches the server, it has an initial value. This
value decays during service and the controller gains a positive reward
(i.e. negative cost) when the service is completed. When the value of
the job reaches zero, the job is ejected from the system. Non-negative
costs are incurred in each time slot for holding the residual jobs as
well as for the choice of service probability. We seek to minimize the
total cost incurred for servicing the set of jobs.

One of the unique features of this model is that the value decay only
occurs during service. This is motivated by several specific
applications. In wireless streaming, we have previously considered a
similar model in which the value decay follows a step function so that
jobs essentially have service time constraint
\cite{Master_HOL_2014}\cite{Master_HOL_2015}. The idea is that when
multimedia is streamed over wireless, it is important to maintain a
regular stream of information. Because information is encoded across
packets, it can be better to drop packets and degrade the quality of
the stream rather than delay the entire stream; the service time
constraints enforce this behavior. In perishable inventory control,
having decay during service but not during storage models the idea
that decay happens on different time scales. For example, the quality
of certain food items decay very slowly (practically not at all) if
stored properly but will decay rapidly during transportation and
processing.

Because we focus on this specific type of value decay, this work
expands on and partially complements the existing literature. For
instance, others have studied monotonicity properties of the optimal
service rate control policy for a continuous time Markovian queue with
jobs whose value does not decay
\cite{George_ServiceRateControl_2001}. In the operations research
community, there has also been work on myopic policies for
non-preemptive scheduling of jobs whose value decays over the entire
sojourn time rather than just during service
\cite{Chan_Decay_2015}. Note that a model in which job value decays
during the entire sojourn time does not encompass the problem of
having job value decay only during service.

The remainder of the paper is organized as follows. In
Sec. \ref{sec:model}, we mathematically define the aforementioned
system. This allows us to formulate the problem in a dynamic
programming \cite{Bertsekas_DP_2005} framework. In
Sec. \ref{sec:numerical}, we numerically demonstrate some of the
salient structural features of optimal policies. In particular, we
comment on monotonicity of the policies as the number of jobs
decreases and as the HOL job value decreases. In
Sec. \ref{sec:theory}, we prove sufficient (and in some cases also
necessary) conditions for these observed monotonicity properties to
hold. We identify future areas of research in Sec. \ref{sec:future}
and conclude in Sec. \ref{sec:conclusion}.

\section{System Model and Optimal Control\label{sec:model}}
In this section, we mathematically define the system of interest. We
describe the dynamics as well as the costs. We formulate the optimal
control in a dynamic programming \cite{Bertsekas_DP_2005} framework
and use some results on stochastic shortest path problems
\cite{Bertsekas_SSP_1991} to show that optimal policies exist.

A finite set of $B \in \Zbb_{> 0}$ identical jobs is sequentially
served in discrete time indexed by $t \in \Zbb_{\geq 0}$. The number
of jobs in the system in time slot $t$ is $b_t$. When a job initially
reaches the head-of-line (HOL) in time slot $t$, it has a value of
$v_t = V \in \Zbb_{> 0}$. In time slot $t$, the HOL job completes
service with probability $s_t \in \Scal \subseteq [0, 1]$ which is
chosen by the controller. If the service is not completed, the value
is decremented by one. The service attempt in time slot $t$ is
independent of all other service attempts. When the HOL job value
reaches zero, the job is ejected from the queue and the next job takes
the HOL. The system terminates when all jobs have either been serviced
or ejected.

Let $\Bcal = \set{1, 2, \hdots, B}$, $\Vcal = \set{1, 2, \hdots, V}$,
and $\Xcal = (\Bcal \times \Vcal) \cup \set{(0, V)}$. The state will
be taken as the remaining number of jobs in the system and the
remaining value of the HOL job so the state at time $t$ is then given
by $(b_t, v_t) \in \Xcal$.  Let $\set{w_t}_{t=0}^\infty$ be an IID
$\text{Uniform}[0,1]$ noise source. We can write the state update
function as follows:
\begin{align*}
  (b_{t + 1}, v_{t + 1})
  &= F(b_t, v_t, s_t, w_t)\\
  &= \left\{
    \begin{array}{ll}
      (b_t, v_t - 1) &; b_t > 0, w_t > s_t, v_t > 1\\
      (b_t - 1, V) &; b_t > 0, w_t > s_t, v_t = 1\\
      (b_t - 1, V) &; b_t > 0, w_t \leq s_t\\
      (0, V) &; b_t = 0
    \end{array}
  \right.
\end{align*}
We assume that $\Scal$ is finite. The set of admissible control
policies is given by
\begin{equation*}
  \Pi = \set{\pi: \Xcal \goesto \Scal}.
\end{equation*}

The cost per time slot of service is $c: \Scal \goesto \Rbb_{\geq
  0}$. The cost per time slot of holding jobs is $h : \Bcal \goesto
\Rbb_{\geq 0}$. The reward for servicing a job is given by $r:\Vcal
\goesto \Rbb_{> 0}$. Therefore, if the HOL job completes service when
it has residual value $v$, the cost is given by $-r(v)$. Although
$r(\cdot)$ is positive and only defined on $\Vcal$, the dynamics
logically suggest that $r(0) = 0$ since jobs with zero value are
ejected. We assume that $c(\cdot)$, $h(\cdot)$, and $r(\cdot)$ are
each non-decreasing. If we let $\Ind_{\set{\cdot}}$ be the indicator
function, we can define the stage cost in time slot $t$ as
\begin{align*}
    G(b_t, v_t, s_t, w_t) 
    &= \Ind_{\set{b_t > 0}}\left(h(b_t) + c(s_t) - \Ind_{\set{w_t \leq s_t}}r(v_t)\right).
\end{align*}

Given the initial state is $(b, v) \in \Xcal$, we define the optimal
cost-to-go as follows:
\begin{align*}
  \Jcal&(b, v) \\
  &= \min_{\pi \in \Pi}
  \E\left[\left.
      \sum_{t=0}^\infty G(b_t, v_t, \pi(b_t, v_t), w_t) \right| (b_0, v_0) = (b, v)\right]
\end{align*}
The system reaches the terminal state $(0, V)$ with probability one in
at most $BV$ time slots.  In addition, $\Bcal$ and $\Vcal$ are finite
so the costs are bounded (though not necessarily
non-negative). Therefore, $\Jcal(b, v)$ is well defined for all $(b,
v) \in \Xcal$.

Because the control policies select probability distributions on the
state transitions, we have a stochastic shortest path problem. By
assumption, $\Scal$ is finite so this can be solved using standard
techniques like value iteration and policy iteration
\cite{Bertsekas_SSP_1991}. Hence, we have the following Bellman
equation
\begin{align*}
  \Jcal&(b, v)
  = \min_{s \in \Scal}\Big\{c(s) + h(b)\\
  &+ s[-r(v) + \Jcal(b-1, V)]\\
  &+ (1-s)[\Jcal(b, v-1)\Ind_{\set{v > 1}} + \Jcal(b-1, V)\Ind_{\set{v=1}}]
  \Big\} 
\end{align*}
with the boundary condition that $\Jcal(0, V) = 0$. In general, there
can be multiple optimal policies but we will refer to \emph{the}
optimal policy as
\begin{align*}
  \mu&(b, v)
  = \min\argmin_{s \in \Scal}\Big\{c(s) + h(b)\\
  &+ s[-r(v) + \Jcal(b-1, V)]\\
  &+ (1-s)[\Jcal(b, v-1)\Ind_{\set{v > 1}} + \Jcal(b-1, V)\Ind_{\set{v=1}}]
  \Big\}
\end{align*}
with $\mu(0, V)$ being arbitrary because $(0, V)$ is a cost-free
trapping state.  Again, since we are solving a stochastic shortest
path problem, $\mu$ can be computed by using either value iteration or
policy iteration \cite{Bertsekas_SSP_1991}.

\begin{figure*}
  \begin{center}
  \subfloat[\label{fig:monotone_give_up}]{
    \includegraphics[width=\columnwidth]{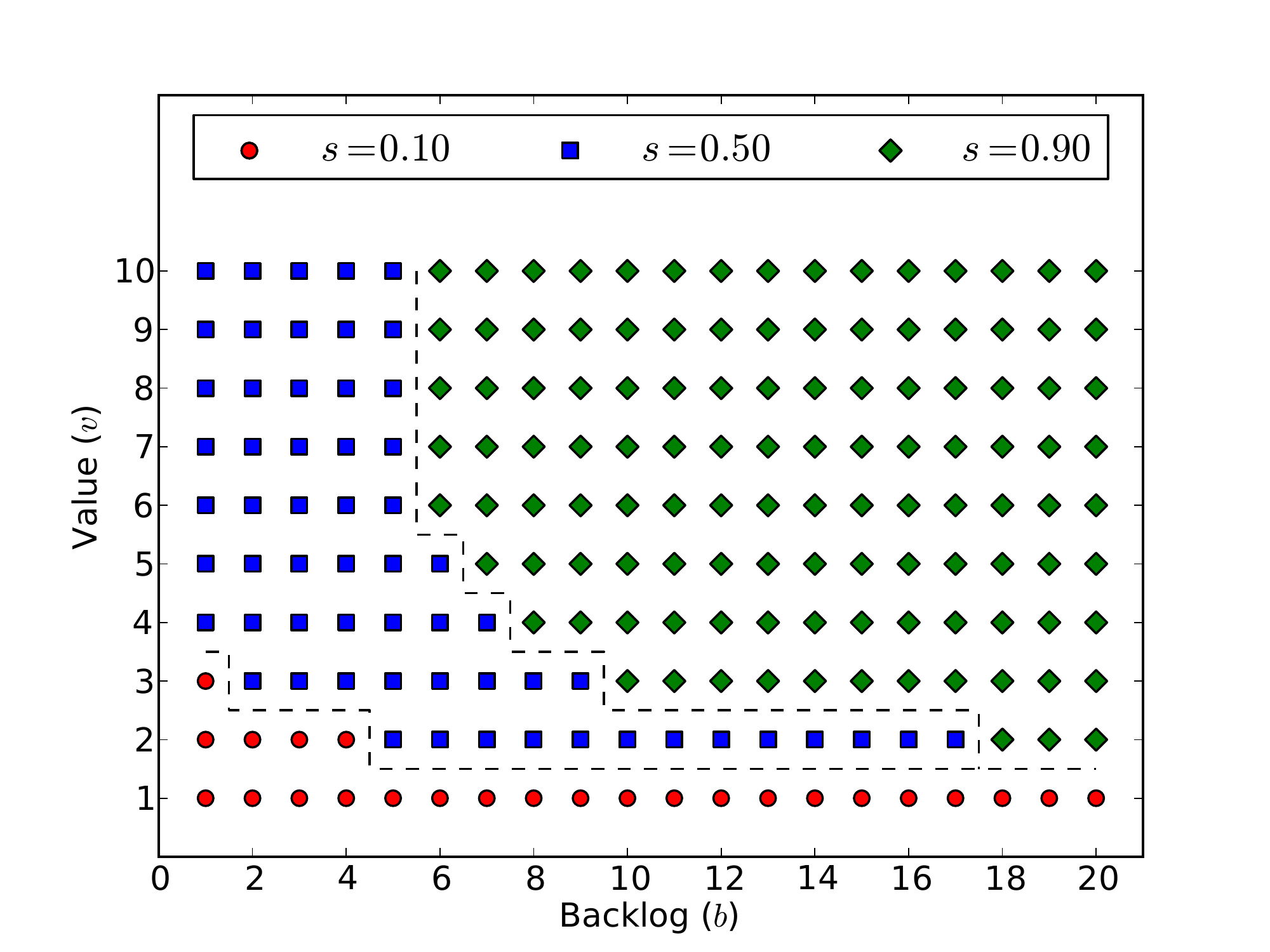}}
  \subfloat[\label{fig:monotone_try_harder}]{
    \includegraphics[width=\columnwidth]{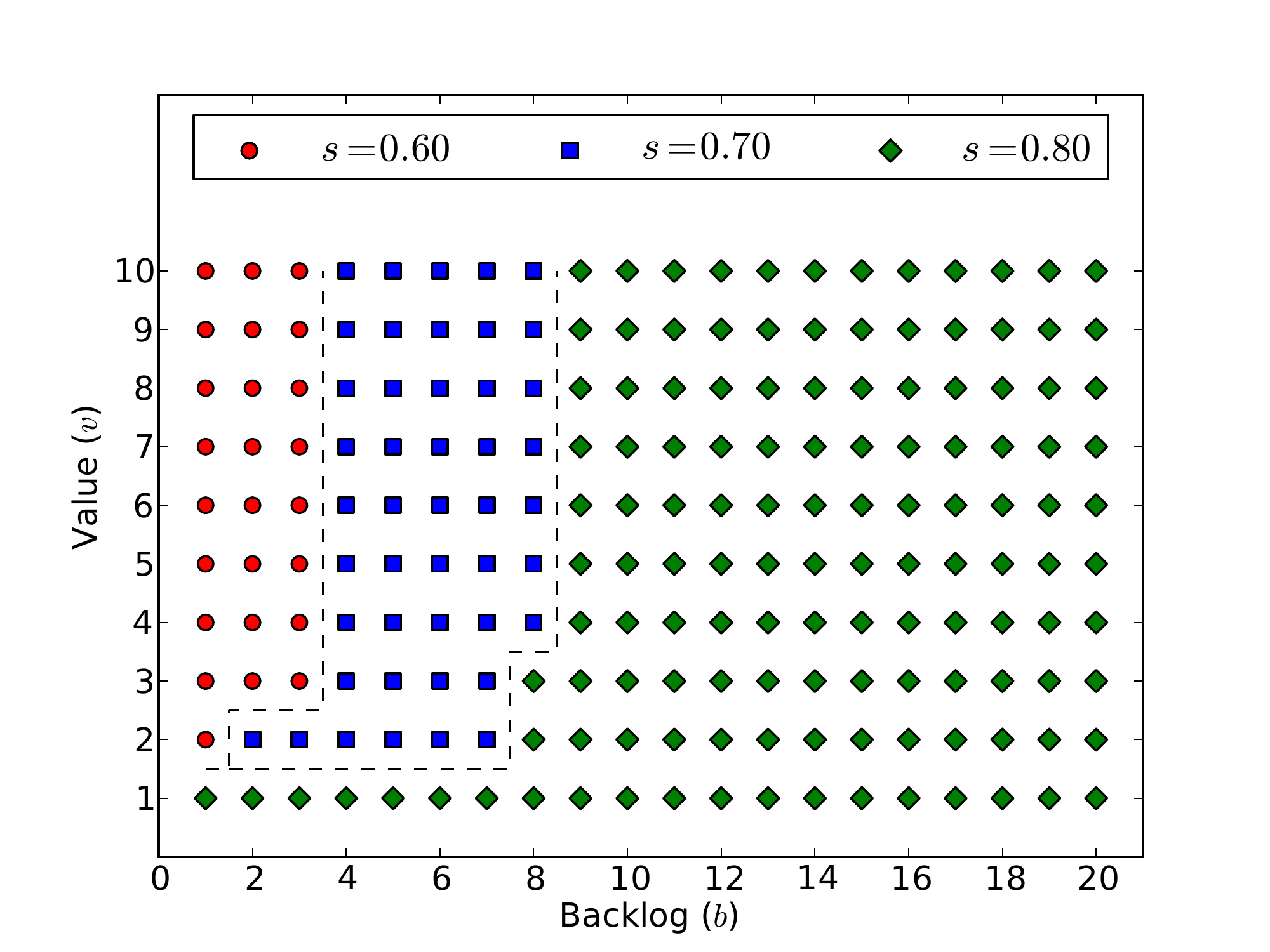}}\\
  \subfloat[\label{fig:monotone_mixed}]{
    \includegraphics[width=\columnwidth]{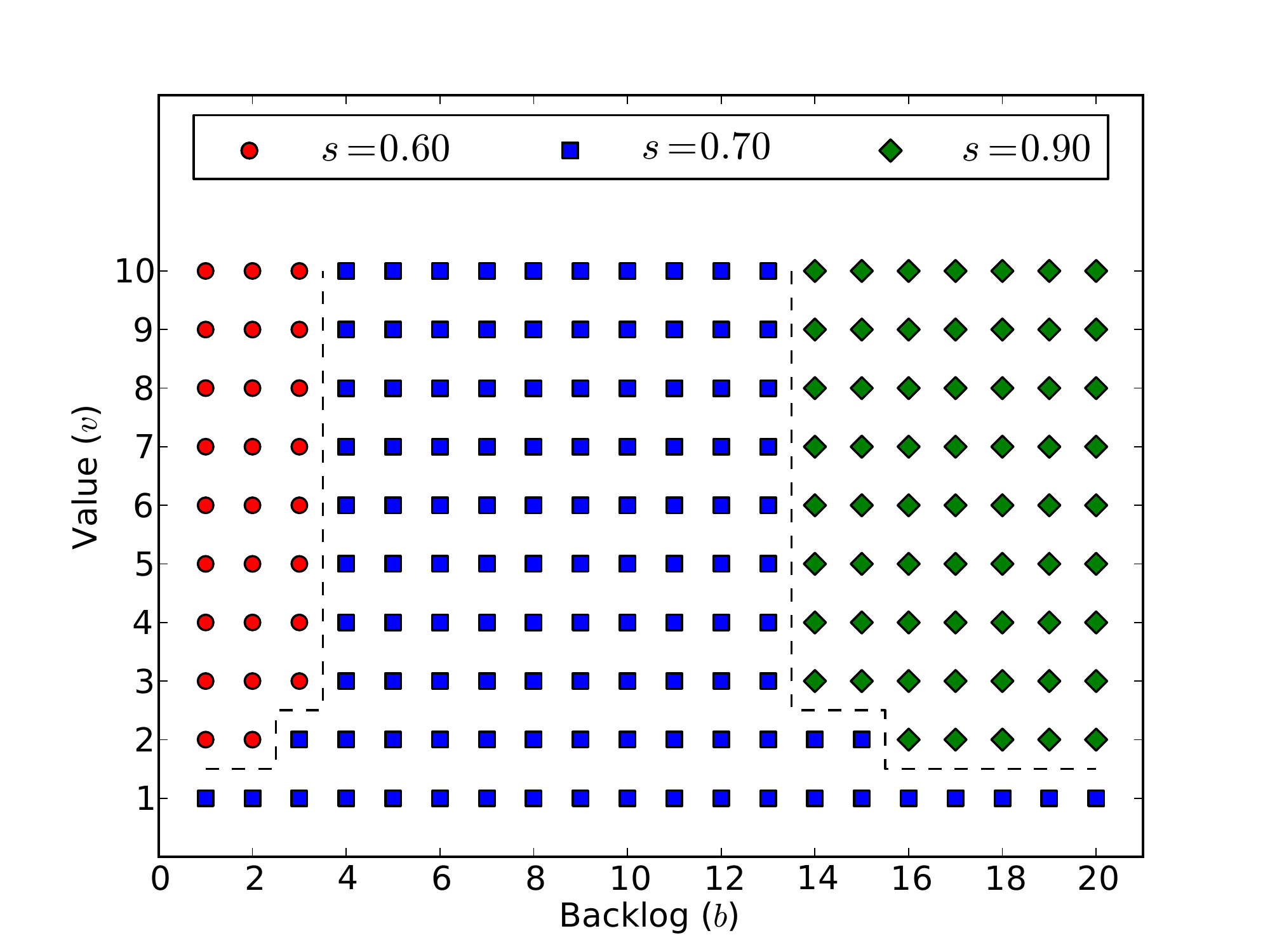}}
  \subfloat[\label{fig:non-monotone}]{
    \includegraphics[width=\columnwidth]{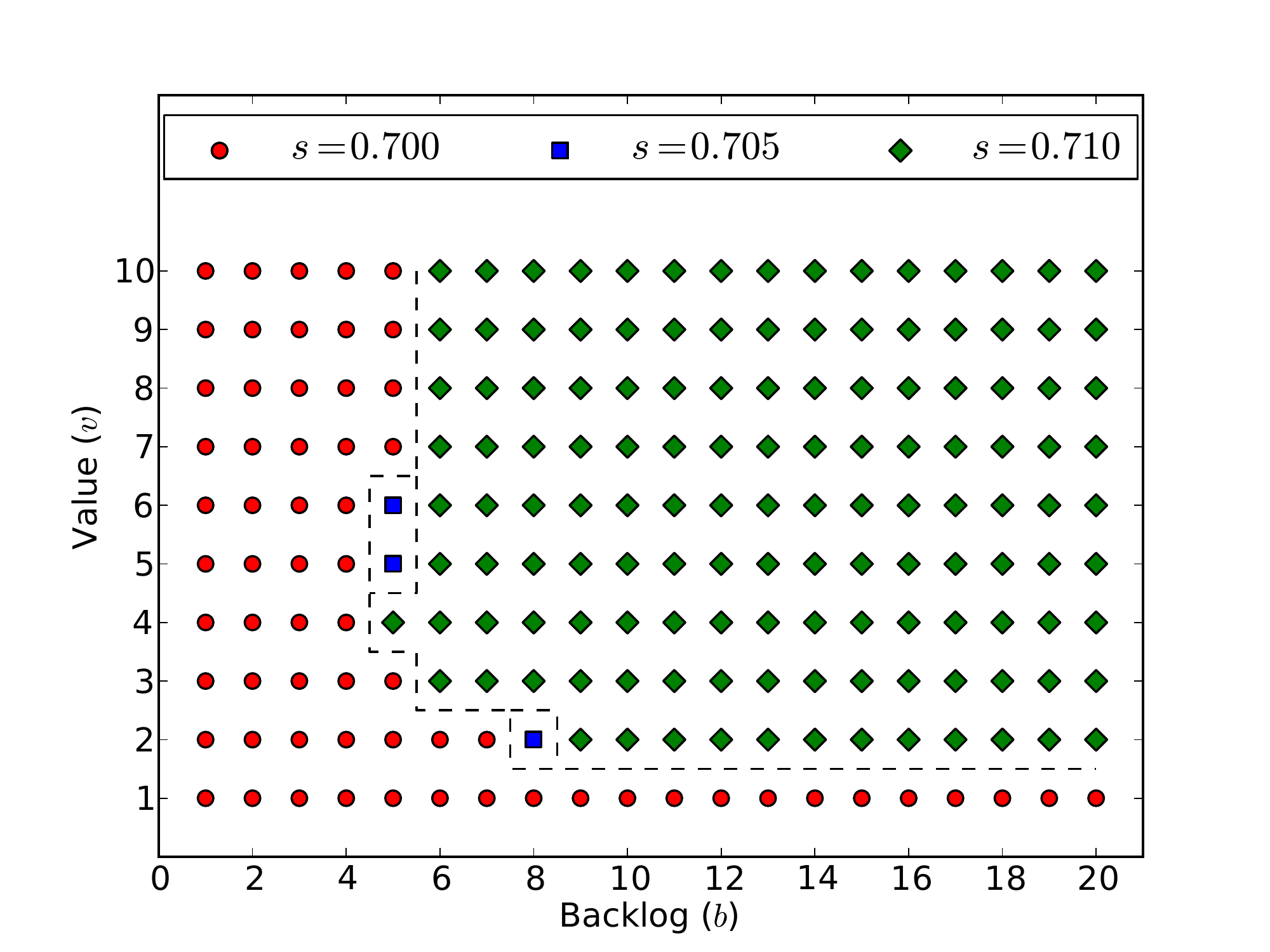}}
  \caption{\footnotesize Examples of $\mu$ for different system
    parameters. For each $(b, v) \in \Bcal \times \Vcal$, we plot a point to 
    indicate the value of $\mu(b, d)$. The dashed lines segment the state space to 
    show when the policy changes. For each of the following policies, we take $h(b) =
    b$, $c(s) = 5\ln\left(\frac{1}{1 - s}\right)$, $V = 10$, and $B =
    20$. 
    For Fig. \ref{fig:monotone_give_up}, $r(v) = v$ and $\Scal = \set{0.1, 0.5, 0.9}$. 
    In this case, $b \mapsto \mu(b, v)$ is non-decreasing for all $v \in \Vcal$ and 
    $v \mapsto \mu(b, v)$ is non-decreasing for all $b \in \Bcal$.
    For Fig. \ref{fig:monotone_try_harder}, 
    $r(v) = \frac{v}{10} + 25$ and $\Scal = \set{0.6, 0.7, 0.8}$. 
    In this case, $b \mapsto \mu(b, v)$ is non-decreasing for all $v \in \Vcal$ and 
    $v \mapsto \mu(b, v)$ is non-increasing for all $b \in \Bcal$.
    For Fig. \ref{fig:monotone_mixed}, 
    $r(v) = \frac{v}{10} + 20$ and $\Scal = \set{0.6, 0.7, 0.9}$. 
    In this case, $b \mapsto \mu(b, v)$ is non-decreasing for all $v \in \Vcal$ while 
    the monotonicity of $v \mapsto \mu(b, v)$ varies with $b$.
    For Fig. \ref{fig:non-monotone}, $r(v) = 5\ln(1 + v)$ 
    and $\Scal = \set{0.700, 0.705, 0.710}$. 
    In this case, $b \mapsto \mu(b, v)$ is non-decreasing for all $v \in \Vcal$ while
    $v \mapsto \mu(b, v)$ is not necessarily monotone in anyway; 
    note that $v \mapsto \mu(5, v)$ is neither non-decreasing nor non-increasing.
  }
  \end{center}
\end{figure*}

\section{Numerical Experiments\label{sec:numerical}}
In this section we offer a brief numerical investigation of the
optimal policy under different conditions. This allows us to
demonstrate the potential structural properties of $\mu$. In each case
we observe that $b \mapsto \mu(b, v)$ is non-decreasing. We observe
that similar monotonicity properties do not always hold for $v \mapsto
\mu(b, v)$. This motivates the analytic investigation in
Sec. \ref{sec:theory}.

For each of the following policies, we take $h(b) = b$, $c(s) =
5\ln\left(\frac{1}{1 - s}\right)$, $V = 10$, and $B = 20$. We vary
$r(\cdot)$ and $\Scal$ to demonstrate different structural
features. Note that even though $c(1) = \infty$, in each example $1
\not\in \Scal$ so the boundedness of $c(\cdot)$ is not violated. These
parameters are not intended to model a specific system and have been
chosen for illustrative purposes.

For Fig. \ref{fig:monotone_give_up}, $r(v) = v$ and $\Scal = \set{0.1,
  0.5, 0.9}$.  In this case, $b \mapsto \mu(b, v)$ is non-decreasing
for all $v \in \Vcal$ and $v \mapsto \mu(b, v)$ is non-decreasing for
all $b \in \Bcal$. To anthropomorphize these properties, we can think
of the server ``giving up'' on a particular job as the job value
decreases. Similarly, the server generally ``tries harder'' when there
are more jobs remaining to be served.

For Fig. \ref{fig:monotone_try_harder}, $r(v) = \frac{v}{10} + 25$ and
$\Scal = \set{0.6, 0.7, 0.8}$.  In this case, $b \mapsto \mu(b, v)$ is
non-decreasing for all $v \in \Vcal$ and $v \mapsto \mu(b, v)$ is
non-increasing for all $b \in \Bcal$. The server still ``tries
harder'' when there are more jobs remaining, but the server also
``tries harder'' as the value of the HOL job decays. This shows that
in some cases, it is optimal for the server to try to complete jobs
even when they have low residual value.

For Fig. \ref{fig:monotone_mixed}, $r(v) = \frac{v}{10} + 20$ and
$\Scal = \set{0.6, 0.7, 0.9}$.  In this case, $b \mapsto \mu(b, v)$ is
non-decreasing for all $v \in \Vcal$ while the monotonicity of $v
\mapsto \mu(b, v)$ varies with $b$. As in the previous two cases, the
server ``tries harder'' when there are more jobs remaining. However,
the monotonicity of $v \mapsto \mu(b, v)$ depends on $b$. This
demonstrates that although it can be optimal for the server to
complete jobs with low residual value, this behavior depends on how
many other jobs are waiting to be served.

For Fig. \ref{fig:non-monotone}, $r(v) = 5\ln(1 + v)$ and $\Scal =
\set{0.700, 0.705, 0.710}$.  In this case, $b \mapsto \mu(b, v)$ is
non-decreasing for all $v \in \Vcal$ while $v \mapsto \mu(b, v)$ is
not necessarily monotone in anyway; note that $v \mapsto \mu(5, v)$ is
neither non-decreasing nor non-increasing. In this final case, we
again see that the server ``tries harder'' when there are more jobs
remaining. However, $v \mapsto \mu(b, v)$ does not exhibit either the
``try harder'' or the ``give up'' behaviors.

\section{Monotonicity of the Optimal Policy\label{sec:theory}}
The numerical examples from the previous section demonstrate the
potentially rich structure of $\mu$. The monotonicity properties that
often hold are interesting because they offer structural insights and
intuitive explanations. However, it is not immediately clear what
conditions are necessary in order to guarantee that these properties
hold. In this section we show that because $h(\cdot)$ is
non-decreasing, $b \mapsto \mu(b, v)$ will be non-decreasing for each
$v \in \Vcal$. We also provide algebraic conditions for determining
the monotonicity of $v \mapsto \mu(b, v)$. These algebraic conditions
are valuable because they can be verified without explicitly solving
for $\mu$. In the case that $r(\cdot)$ is constant, we provide a
simpler algebraic condition which is similar to the one provided in
\cite{Master_HOL_2014}.

We start with some useful definitions.
\begin{definition}
  For each $b \in \Bcal$, let $\delta(b, 0) = 0$ and $\sigma(b, 0) =
  0$. For each $(b, v) \in \Bcal \times \Vcal$, define $\delta(b, v)$
  and $\sigma(b, v)$ as follows:
  \begin{align*}
    \delta(b, v) &= h(b) + 
    \min_{s \in \Scal}\left\{c(s) - s\left[r(v) + \sum_{i=0}^{v-1}\delta(b, i)\right]\right\}\\
    \sigma(b, v) &= \sum_{i=0}^v \delta(b, i)
  \end{align*}
  For each $(b, v) \in \Bcal \times \Vcal$ define $T_{b,v} : \Rbb
  \goesto \Rbb$ as follows:
  \begin{align*}
    T_{b, v}(x) = x + h(b) + \min_{s \in \Scal} \{c(s) - s[r(v) +x]\}
  \end{align*}
\end{definition}

\begin{proposition}
  \label{prop:deltas}
  For each $(b, v) \in \Bcal \times \Vcal$, the Bellman equation can
  be characterized as follows:
  \begin{align*}
    \Jcal(b, v)
    = \left\{
      \begin{array}{ll}
        \Jcal(b-1, V) + \delta(b, 1) &, v = 1\\
        \Jcal(b, v - 1) + \delta(b, v) &, v > 1
      \end{array}
   \right.
  \end{align*}
  Furthermore, the optimal policy can be written as
  \begin{align*}
    \mu(b, v) = \min\argmin_{s \in \Scal}
    \left\{
      c(s) - s\left[ r(v) + \sigma(b, v-1)\right]
    \right\}.
  \end{align*}
\end{proposition}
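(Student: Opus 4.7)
The plan is to prove both assertions simultaneously by induction on $v \in \Vcal$ with $b$ held fixed, substituting the inductive hypothesis directly into the Bellman equation and reading off the minimizer. The key observation that makes the argument go through is that the Bellman recursion only compares $\Jcal(b, v-1)$ to $\Jcal(b-1, V)$, and their difference will turn out to equal $\sigma(b, v-1)$, which is exactly what appears in the claimed formula for $\mu(b,v)$.

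For the base case $v = 1$, I would specialize the Bellman equation to $v = 1$, where the two continuation terms $\Jcal(b, v-1)\Ind_{\set{v > 1}}$ and $\Jcal(b-1, V)\Ind_{\set{v = 1}}$ collapse so that both the ``success'' and ``failure'' branches land on $\Jcal(b-1, V)$. Pulling $\Jcal(b-1, V) + h(b)$ outside the minimization leaves $\min_{s \in \Scal}\{c(s) - s\, r(1)\}$, which by the definition with $\sigma(b,0) = 0$ is exactly $\delta(b,1) - h(b) + h(b) = \delta(b,1)$. This verifies the value formula at $v=1$, and the minimizer is $\min\argmin_{s \in \Scal}\{c(s) - s[r(1) + \sigma(b, 0)]\}$, as required.

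For the inductive step, assume the claim holds for all values up to $v-1$. Telescoping the recurrence $\Jcal(b, k) = \Jcal(b, k-1) + \delta(b, k)$ for $k = 2, \ldots, v-1$ down to the base case gives the crucial identity
\begin{equation*}
  \Jcal(b, v-1) - \Jcal(b-1, V) = \sum_{i=1}^{v-1} \delta(b, i) = \sigma(b, v-1).
\end{equation*}
Substituting this into the Bellman equation for $(b, v)$ with $v > 1$, factoring out the $s$-independent term $\Jcal(b, v-1) + h(b)$, the coefficient of $s$ inside the minimization becomes $-[r(v) + \Jcal(b-1, V) - \Jcal(b, v-1)] \cdot (-1) \cdot (-1)$, which simplifies to $-[r(v) + \sigma(b, v-1)]$. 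Hence the minimization reduces to $\min_{s \in \Scal}\{c(s) - s[r(v) + \sigma(b, v-1)]\}$, so $\Jcal(b, v) = \Jcal(b, v-1) + \delta(b, v)$ and the minimizing $s$ matches the claimed form of $\mu(b, v)$.

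The argument is almost entirely bookkeeping; there is no genuine difficulty beyond being careful with the indicator functions in the base case and with the telescoping. The one place I would double-check is confirming that the induction needs only to proceed in $v$ (with $b$ fixed), since the term $\Jcal(b-1, V)$ appearing in the Bellman equation is a pure constant from the perspective of the recursion in $v$ and does not require a simultaneous induction on $b$; this is what makes the decomposition $\Jcal(b, v) = \Jcal(b-1, V) + \sigma(b, v)$ available for all $v$ once the base case is secured.
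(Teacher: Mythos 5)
Your proposal is correct and follows essentially the same route as the paper: induction on $v$ with $b$ fixed, pulling the $s$-independent terms $\Jcal(b,v-1)+h(b)$ (or $\Jcal(b-1,V)+h(b)$ in the base case) out of the Bellman minimization, and using the telescoped identity $\Jcal(b,v-1)-\Jcal(b-1,V)=\sigma(b,v-1)$ to identify the coefficient of $s$ as $-[r(v)+\sigma(b,v-1)]$, after which the formula for $\mu(b,v)$ follows by discarding additive constants. The paper's proof is just a more explicit write-out of the same telescoping step.
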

\begin{proof}
  For any fixed $b \in \Bcal$, we apply the principle of strong
  mathematical induction on $v \in \Vcal$. For $v = 1$, we merely need
  to re-order the Bellman equation:
  \begin{align*}
    \Jcal&(b, 1) \nonumber\\
    &= \min_{s \in \Scal}\Big\{ c(s) + h(b) \\
    &+ s[-r(1) + \Jcal(b-1, V)] + (1 - s)\Jcal(b-1, V)\Big\}\nonumber\\
    &= \Jcal(b-1, V) + h(b) \\
    &+ \min_{s \in \Scal}\Big\{ c(s) + s[-r(1) + \Jcal(b-1, V)] - s\Jcal(b-1, V)\Big\}\nonumber\\
    &= \Jcal(b-1, V) + h(b) + \min_{s \in \Scal}\left\{ c(s) - sr(1)\right\}\\
    &= \Jcal(b-1, V) + \delta(b,1)
  \end{align*}
  We now use this for $v = 2$:
  \begin{align*}
    \Jcal(b, 2)\nonumber
    &= \min_{s \in \Scal}\Big\{ c(s) + h(b) \\
    &+ s[-r(2) + \Jcal(b-1, V)] + (1 - s)\Jcal(b, 1)\Big\}\nonumber\\
    &= \Jcal(b, 1) + h(b) \\
    &+ \min_{s \in \Scal}\left\{ c(s) + s[-r(2) + \Jcal(b-1, V) - \Jcal(b, 1)]\right\}\nonumber\\
    &= \Jcal(b, 1) + h(b) 
    + \min_{s \in \Scal}\left\{ c(s) - s[r(2) + \delta(b, 1)]\right\}\\
    &= \Jcal(b, 1) + \delta(b, 2)
  \end{align*}
  Now assume that the proposition holds for $\set{1, \hdots, v}
  \subsetneq \Vcal$.
  \begin{align*}
    &\Jcal(b, v+1)\\
    &= \min_{s \in \Scal}\Big\{ c(s) + h(b) \\
    &+ s[-r(v+1) + \Jcal(b-1, V)] + (1 - s)\Jcal(b, v)\Big\}\\
    &= \Jcal(b, v) + h(b)\\
    &+\min_{s \in \Scal}\left\{ c(s) + s[-r(v+1) + \Jcal(b-1, V) - \Jcal(b, v)]\right\}\\
    &= \Jcal(b, v) + h(b) + \min_{s \in \Scal}\Big\{ c(s) -s[r(v+1)\\
    &+ \sum_{i=2}^v(\Jcal(b, i) - \Jcal(b, i-1)) + (\Jcal(b, 1)-\Jcal(b-1, V))]\Big\}
   \end{align*}
   Now we apply the induction hypothesis to write sum in the final
   line in terms of $\delta(b, i)$. We then use the definitions of
   $\delta(b, v+1)$ and $\sigma(b, v)$ to complete the proof.
   \begin{align*}
     &\Jcal(b, v+1)\\
    &= \Jcal(b, v) + h(b) 
    + \min_{s \in \Scal}\left\{ c(s)  -s[r(v+1) + \sum_{i=0}^v \delta(b, i)]\right\}\nonumber\\
    &= \Jcal(b, v) + h(b) 
    + \min_{s \in \Scal}\left\{ c(s)  -s[r(v+1) + \sigma(b, v)]\right\}\nonumber\\
    &= \Jcal(b, v) + \delta(b, v + 1)
  \end{align*}

  Now that we have this alternative characterization of the Bellman
  equation, we simply ignore the terms which do not involve $s$ to
  conclude that
  \begin{align*}
    \mu(b, v) = \min\argmin_{s \in \Scal}
    \left\{
      c(s) - s\left[ r(v) + \sigma(b, v-1)\right]
    \right\}.
  \end{align*}
\end{proof}

This reformulation will be useful for determining the monotonicity
properties of $\mu$. To do so, we will make use of the following
definition and theorem (a version of Topkis's Theorem
\cite{Topkis_MinSubmodular_1978}).
\begin{lemma}
  \label{lemma:topkis}
  Let $\Dcal_1 \subseteq \Rbb$ and $\Dcal_2 \subseteq \Rbb$ be
  non-empty and suppose $f: \Dcal_1 \times \Dcal_2 \goesto \Rbb$
  satisfies the following inequality for all $d_1^- \leq d_1^+$ and
  $d_2^- \leq d_2^+$:
  \begin{align*}
    f(d_1^+, d_2^+) &+ f(d_1^-, d_2^-) \leq f(d_1^+, d_2^-) + f(d_1^-, d_2^+)
  \end{align*}
  Then $f$ is \emph{submodular}. If $f$ is submodular and we define
  $g:\Dcal_2 \goesto \Dcal_1$ as
  \begin{equation*}
    g(d_2) = \min\argmin_{d_1 \in \Dcal_1} f(d_1, d_2)
  \end{equation*}
  then $g(\cdot)$ is non-decreasing.
\end{lemma}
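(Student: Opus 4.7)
The first half of the statement merely defines submodularity, so the plan only needs to address the claim that $g$ is non-decreasing.

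My plan is to argue by contradiction. I fix $d_2^- \leq d_2^+$ in $\Dcal_2$ and write $d_1^- = g(d_2^-)$, $d_1^+ = g(d_2^+)$. Assume, toward contradiction, that $d_1^+ < d_1^-$. I would then combine the optimality of $d_1^-, d_1^+$ with the submodularity inequality to show that both pairs of inequalities collapse to equalities, which will force $d_1^+$ to lie in $\argmin_{d_1 \in \Dcal_1} f(d_1, d_2^-)$ while being strictly below $d_1^-$; this contradicts the $\min\argmin$ selection in the definition of $g$.

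Concretely, I would first record the two optimality inequalities
\begin{align*}
  f(d_1^-, d_2^-) &\leq f(d_1^+, d_2^-),\\
  f(d_1^+, d_2^+) &\leq f(d_1^-, d_2^+),
\end{align*}
which follow from $d_1^- \in \argmin_{d_1} f(d_1, d_2^-)$ and $d_1^+ \in \argmin_{d_1} f(d_1, d_2^+)$. Summing gives the inequality in one direction. Then I would apply the submodularity hypothesis, taking $d_1^+$ in the role of the smaller first argument and $d_1^-$ in the role of the larger one (which is permitted under the contradiction assumption $d_1^+ \leq d_1^-$), to obtain the reverse inequality. The two together force equality throughout; in particular $f(d_1^+, d_2^-) = f(d_1^-, d_2^-)$, so $d_1^+ \in \argmin_{d_1} f(d_1, d_2^-)$. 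Since $d_1^+ < d_1^- = \min\argmin_{d_1} f(d_1, d_2^-)$, this contradicts the minimality of $d_1^-$ in that argmin set.

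There is no serious obstacle; the only subtlety worth flagging is the role of the $\min$ in the definition of $g$. Submodularity alone only gives an intertwining of the two argmin sets, and without the smallest-minimizer selection rule one could not rule out ``ties'' that break the monotonicity of the chosen selectors. Using $\min\argmin$ is exactly what turns the equality $f(d_1^+, d_2^-) = f(d_1^-, d_2^-)$ into a contradiction and yields the monotonicity of $g(\cdot)$.
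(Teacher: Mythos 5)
Your argument is correct. Note that the paper itself does not prove this lemma at all -- it simply cites Topkis's theorem -- so your proposal supplies a proof where the paper offers only a reference. The contradiction argument is the standard exchange argument and it goes through exactly as you sketch: with $d_2^- \leq d_2^+$, $d_1^- = g(d_2^-)$, $d_1^+ = g(d_2^+)$, and the assumption $d_1^+ < d_1^-$, summing the two optimality inequalities gives $f(d_1^-, d_2^-) + f(d_1^+, d_2^+) \leq f(d_1^+, d_2^-) + f(d_1^-, d_2^+)$, while submodularity applied with $d_1^+$ as the smaller first argument gives the reverse inequality; since the two individual optimality inequalities have opposite signs after cancellation, both must be equalities, so $d_1^+$ is a minimizer of $f(\cdot, d_2^-)$ strictly below $\min\argmin_{d_1} f(d_1, d_2^-)$, a contradiction. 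Your closing remark about the $\min\argmin$ selection is exactly the right point: submodularity alone only intertwines the argmin sets, and an arbitrary selection of minimizers need not be monotone, so the smallest-minimizer rule is what closes the argument. The only hypothesis worth making explicit is that the minima in the definition of $g$ are attained (otherwise $g$ is not well defined); in the paper's application this is automatic because $\Dcal_1 = \Scal$ is finite.
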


\begin{proposition}
  \label{prop:g}
  There exists a non-decreasing function $g: \Rbb \goesto \Scal$ such
  that $\mu(b, v) = g(r(v) + \sigma(b, v-1))$.
\end{proposition}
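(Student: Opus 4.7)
The plan is to read off the formula for $\mu$ from Proposition \ref{prop:deltas} and notice that the only way $(b, v)$ enters the minimization is through the scalar $y := r(v) + \sigma(b, v-1)$. So I would define
\begin{equation*}
  g(y) = \min\argmin_{s \in \Scal} \{c(s) - s y\}, \quad y \in \Rbb,
\end{equation*}
which maps $\Rbb \to \Scal$. Then the identity $\mu(b, v) = g(r(v) + \sigma(b, v-1))$ is immediate from Proposition \ref{prop:deltas}, and the entire content of the claim reduces to showing that $g$ is non-decreasing.

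For monotonicity of $g$ I would invoke Lemma \ref{lemma:topkis} with $\Dcal_1 = \Scal$, $\Dcal_2 = \Rbb$, and $f(s, y) = c(s) - sy$. Submodularity is a one-line check: for $s^- \leq s^+$ in $\Scal$ and $y^- \leq y^+$ in $\Rbb$,
\begin{align*}
  &f(s^+, y^+) + f(s^-, y^-) - f(s^+, y^-) - f(s^-, y^+) \\
  &\qquad = (s^- - s^+)(y^+ - y^-) \leq 0,
\end{align*}
so $f$ is submodular and Lemma \ref{lemma:topkis} gives that $g$ is non-decreasing.

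There is really no hard step here; the substantive work has already been done in Proposition \ref{prop:deltas}, which rewrote the Bellman minimization so that the $(b,v)$-dependence collapses into the single scalar argument $r(v) + \sigma(b, v-1)$. The only mild point to be careful about is that $g$ is well-defined on all of $\Rbb$ (not just on the range of $(b, v) \mapsto r(v) + \sigma(b, v-1)$), which is fine since $\Scal$ is finite and $\min\argmin$ always selects a unique element. Once $g$ is shown non-decreasing via the Topkis check, the proposition follows.
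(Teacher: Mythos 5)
Your proof is correct and follows essentially the same route as the paper: verify submodularity of $f(s, x) = c(s) - sx$ with the same one-line algebraic identity, apply Lemma~\ref{lemma:topkis} to conclude $g$ is non-decreasing, and read off the identity $\mu(b, v) = g(r(v) + \sigma(b, v-1))$ from Proposition~\ref{prop:deltas}.
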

\begin{proof}
  Let $f:\Scal \times \Rbb \goesto \Rbb$ be defined by $f(s, x) = c(s)
  - sx$. Take $s^+ \geq s^-$ and $x^+ \geq x^-$. $f$ is submodular if
  \begin{align*}
    f(s^+, x^+) + f(s^-, x^-) \leq  f(s^+, x^-) + f(s^-, x^+).
  \end{align*}
  Let $LHS$ and $RHS$ denote the left and right sides of the previous inequality.
  \begin{align*}
    LHS - RHS
    &= (c(s^+) - s^+ x^+ + c(s^-) - s^- x^-)\\
    &- (c(s^+) - s^+ x^- + c(s^-) - s^- x^+)\nonumber\\
    &= s^+ x^- + s^- x^+ - s^+ x^+ - s^- x^-\\
    &= (s^+ - s^-)(x^- - x^+)
  \end{align*}
  $(s^+ - s^-) \geq 0$ and $(x^- - x^+) \leq 0$ so $LHS \leq RHS$ and
  $f$ is submodular. Let $g$ be defined as
  \begin{equation*}
    g(x) = \min\argmin_{s \in \Scal} f(s, x)
  \end{equation*}
  By Lemma~\ref{lemma:topkis}, $g(\cdot)$ is non-decreasing and by
  Proposition \ref{prop:deltas}, $\mu(b, v) = g(r(v) + \sigma(b,
  v-1))$.
\end{proof}

The previous proposition shows that we can determine the monotonicity
properties of $\mu$ by understanding the monotonicity properties of
$r(v) + \sigma(b, v - 1)$. Since $r(v)$ does not depend on $b$, we can
study $b \mapsto \sigma(b, v)$ in order to understand $b \mapsto
\mu(b, v)$.

\begin{proposition}
  \label{prop:sigma}
  For each $(b, v) \in \Bcal \times \Vcal$, $T_{b, v}(\cdot)$ is
  non-decreasing and $\sigma(b, v) = T_{b, v}(\sigma(b, v-1))$.
\end{proposition}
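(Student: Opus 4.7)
The plan is to treat the two claims separately, since the recursion $\sigma(b,v) = T_{b,v}(\sigma(b,v-1))$ is essentially bookkeeping while the monotonicity of $T_{b,v}$ requires a short argument that leans on the fact that $\Scal \subseteq [0,1]$.

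For the recursion, I would simply substitute $\sigma(b, v-1)$ into the definition of $T_{b,v}$. Writing $\sigma(b, v-1) = \sum_{i=0}^{v-1} \delta(b, i)$, the inner minimization becomes $\min_{s \in \Scal}\{c(s) - s[r(v) + \sum_{i=0}^{v-1} \delta(b,i)]\}$, which is exactly what appears in the defining expression for $\delta(b, v)$. Thus $T_{b,v}(\sigma(b, v-1)) = \sigma(b,v-1) + \delta(b,v) = \sigma(b,v)$ using the definition $\sigma(b,v) = \sum_{i=0}^{v}\delta(b,i)$. This half is a direct unwinding of definitions.

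The more interesting half is monotonicity of $T_{b,v}$. Fix $x^- \leq x^+$ and let $\phi(x) = \min_{s \in \Scal}\{c(s) - s[r(v) + x]\}$, so $T_{b,v}(x) = x + h(b) + \phi(x)$. The plan is to bound how much $\phi$ can decrease when $x$ increases. If I pick any minimizer $s^+ \in \argmin_{s \in \Scal}\{c(s) - s[r(v) + x^+]\}$, then by optimality at $x^-$,
\begin{align*}
    \phi(x^-) \leq c(s^+) - s^+[r(v) + x^-] = \phi(x^+) + s^+(x^+ - x^-).
\end{align*}
Since $\Scal \subseteq [0,1]$ we have $s^+ \leq 1$, giving $\phi(x^+) - \phi(x^-) \geq -(x^+ - x^-)$. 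Adding $x$ back, $T_{b,v}(x^+) - T_{b,v}(x^-) = (x^+ - x^-) + \phi(x^+) - \phi(x^-) \geq 0$. The constant $h(b)$ plays no role.

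The only place care is needed is in invoking $s \leq 1$, which is the single structural hypothesis that prevents the $-sx$ term from overwhelming the explicit $+x$ term. Beyond that, I do not expect any obstacle; both parts should fit into a short proof.
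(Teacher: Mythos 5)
Your proposal is correct, and both halves do the job. The recursion half is exactly the paper's argument: unwind $\sigma(b,v)=\delta(b,v)+\sigma(b,v-1)$ and recognize the inner minimization in $\delta(b,v)$ as the one appearing in $T_{b,v}(\sigma(b,v-1))$. For the monotonicity half you take a slightly different route than the paper: the paper folds the explicit $+x$ into the minimand, writing the bracketed expression for each fixed $s$ as $h(b)+c(s)-s\,r(v)+(1-s)x$, observes that each such function of $x$ is non-decreasing because $1-s\geq 0$, and then uses that a pointwise minimum of non-decreasing functions is non-decreasing; you instead keep $\phi(x)=\min_{s\in\Scal}\{c(s)-s[r(v)+x]\}$ separate and bound its decrease by evaluating the objective at a minimizer $s^{+}$ for $x^{+}$, getting $\phi(x^{+})-\phi(x^{-})\geq -s^{+}(x^{+}-x^{-})\geq -(x^{+}-x^{-})$. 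The two arguments rest on the identical structural fact $\Scal\subseteq[0,1]$ (equivalently $1-s\geq 0$), so neither is more general in substance; the paper's version avoids invoking a minimizer at all, while yours is an envelope-type perturbation bound whose appeal to $\argmin$ is justified here because $\Scal$ is assumed finite (and could be rephrased with infima otherwise). Either write-up would be acceptable.
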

\begin{proof}
  Take $x^+ \geq x^-$. For any $s \in \Scal$, $(1 - s) \geq 0$ so $(1
  - s)x^+ \geq (1-s)x^-$. Adding the same quantity to each side preserves the inequality so
  \begin{align*}
    h(b) + c(s) - r(v) &+ (1 - s)x^+ \geq\nonumber\\
    &h(b) + c(s) - r(v) + (1 - s)x^-
  \end{align*}
  Minimizing over $s \in \Scal$ and applying the monotonicity of
  minimization gives us that $T_{b,v}(x^+) \geq T_{b, v}(x^-)$.

  The second part of the proposition follows from the following algebraic manipulation:
  \begin{align*}
    \sigma&(b, v) \nonumber\\
    &= \sum_{i=0}^v \delta(b, i) = \delta(b, v) + \sigma(b, v-1)\\
    &= h(b)\\
    &+ \min_{s \in \Scal}\left\{c(s) - s\left[r(v) + \sum_{i=0}^{v-1}\delta(b, i)\right]\right\}
    + \sigma(b, v - 1)\nonumber\\
    &= h(b) \\
    &+ \min_{s \in \Scal}\left\{c(s) - s\left[r(v) + \sigma(b, v-1)\right]\right\}
    + \sigma(b, v - 1)\nonumber\\
    &= T_{b, v}(\sigma(b, v-1))
  \end{align*}
\end{proof}

\begin{theorem}
  For each $v \in \Vcal$, $b \mapsto \mu(b, v)$ is non-decreasing.
\end{theorem}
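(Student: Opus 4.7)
The plan is to reduce the monotonicity of $\mu(\cdot, v)$ to the monotonicity of $\sigma(\cdot, v-1)$ via Proposition \ref{prop:g}, and then prove the latter by induction on $v$ using the recursion $\sigma(b, v) = T_{b,v}(\sigma(b, v-1))$ from Proposition \ref{prop:sigma}.

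Concretely, Proposition \ref{prop:g} tells us that $\mu(b, v) = g(r(v) + \sigma(b, v-1))$ with $g$ non-decreasing, so it suffices to show that for each fixed $u \in \{0, 1, \dots, V-1\}$ the map $b \mapsto \sigma(b, u)$ is non-decreasing on $\Bcal$. I would prove this by strong induction on $u$. The base case $u = 0$ is immediate since $\sigma(b, 0) = 0$ by definition. For the inductive step, I would invoke Proposition \ref{prop:sigma} to write $\sigma(b, u) = T_{b, u}(\sigma(b, u-1))$ and then exploit two separate monotonicity properties of $T_{b,u}$.

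The first property is monotonicity in the argument $x$ for fixed $b$, which is exactly the first half of Proposition \ref{prop:sigma}. The second, which is not stated but follows immediately from the definition, is monotonicity in $b$ for fixed $x$: since $T_{b, u}(x) = x + h(b) + \min_{s \in \Scal}\{c(s) - s[r(u) + x]\}$, only the term $h(b)$ depends on $b$, and $h(\cdot)$ is assumed non-decreasing. Combining the two, for any $b^+ \geq b^-$ in $\Bcal$ we would chain
\begin{align*}
\sigma(b^+, u) = T_{b^+, u}(\sigma(b^+, u-1))
&\geq T_{b^+, u}(\sigma(b^-, u-1)) \\
&\geq T_{b^-, u}(\sigma(b^-, u-1)) = \sigma(b^-, u),
\end{align*}
where the first inequality uses the inductive hypothesis together with monotonicity of $T_{b^+, u}$ in its argument, and the second uses the monotonicity of $h$. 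This closes the induction and, composed with the non-decreasing $g$, yields $\mu(b^+, v) \geq \mu(b^-, v)$.

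I do not expect a genuine obstacle here: the structure of the argument is essentially a two-step monotonicity chase, and all the ingredients are already packaged in Propositions \ref{prop:deltas}, \ref{prop:g}, and \ref{prop:sigma}. The only mildly delicate point is separating the two different senses in which $T_{b,u}$ is monotone (in $b$ via the holding cost, and in $x$ via the $(1-s)$ weighting inside the minimum), and making sure to apply them in the correct order so that each invocation uses a pair where one of the two arguments already matches.
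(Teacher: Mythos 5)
Your proof is correct and follows essentially the same route as the paper: reduce to showing $b \mapsto \sigma(b, v)$ is non-decreasing via Proposition \ref{prop:g}, then induct on $v$ using the recursion $\sigma(b,v) = T_{b,v}(\sigma(b,v-1))$ together with monotonicity of $T_{b,v}$ in its argument and in $b$ (through $h$). The only cosmetic difference is that you start the induction at $u=0$ with $\sigma(b,0)=0$ rather than at $v=1$ with the explicit formula, which is equally valid.
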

\begin{proof}
  We prove that $b \mapsto \sigma(b, v)$ is non-decreasing via
  induction. Because $\mu(b,v) = g(r(v) + \sigma(b, v))$ for some
  non-decreasing $g$, the result regarding $b \mapsto \mu(b, v)$
  follows immediately.

  For $v = 1$, 
  $$\sigma(b, v) = \delta(b, 1) = h(b) + \min_{s \in
    \Scal}\left\{c(s) - sr(1)\right\}.$$ By assumption, $h(\cdot)$ is
  non-decreasing so $b \mapsto \sigma(b, 1)$ is non-decreasing. Now
  assume that $b \mapsto \sigma(b, v)$ is non-decreasing for some $v
  \in \Vcal\setminus\set{V}$. Because $h(\cdot)$ is non-decreasing,
  $T_{b', v}(x) \geq T_{b, v}(x)$ whenever $b' \geq b$. In addition,
  $T_{b, v+1}(\cdot)$ is order-preserving (i.e. non-decreasing) and
  $\sigma(b, v+1) = T_{b, v+1}(\sigma(b, v))$. Therefore, $b \mapsto
  \sigma(b, v+1)$ is also non-decreasing. By induction, $b \mapsto
  \sigma(b, v)$ is non-decreasing for all $v \in \Vcal$.
\end{proof}

As demonstrated in Sec. \ref{sec:numerical}, the behavior of $v
\mapsto \mu(b, v)$ is slightly more nuanced. The following theorem
gives a set of algebraic conditions for determining the monotonicity
properties of $v \mapsto \mu(b, v)$. These conditions are useful and
interesting because they can be verified without computing
$\mu$. Furthermore, the proposition relates the rate of decay to the
$\delta$ terms. This matches our intuition that the rate of decay
should play a role in how the controller adapts to the decay itself.

\begin{theorem}
  Fix any $b \in \Bcal$.  If $\delta(b, v) \geq - [r(v+1) - r(v)]$ for
  all $v \in \Vcal\setminus\set{V}$, then $v \mapsto \mu(b, v)$ is
  non-decreasing.  If $\delta(b, v) \leq - [r(v+1) - r(v)]$ for all $v
  \in \Vcal\setminus\set{V}$, then $v \mapsto \mu(b, v)$ is
  non-increasing.
\end{theorem}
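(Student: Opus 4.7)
The plan is to reduce the monotonicity of $v \mapsto \mu(b,v)$ to the monotonicity of $v \mapsto r(v) + \sigma(b,v-1)$ via Proposition~\ref{prop:g}, and then translate the latter into a simple one-step difference condition that matches the hypothesis.

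First, I would invoke Proposition~\ref{prop:g} to write $\mu(b,v) = g(r(v)+\sigma(b,v-1))$ for some non-decreasing $g:\Rbb \goesto \Scal$. Since a composition of a non-decreasing function with a monotone function preserves monotonicity in the obvious way, it suffices to show that $v \mapsto r(v) + \sigma(b,v-1)$ is non-decreasing (respectively, non-increasing) on $\Vcal$ under the stated hypothesis.

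Next, I would examine the forward difference. By the definition of $\sigma$, we have $\sigma(b,v) - \sigma(b,v-1) = \delta(b,v)$ for every $v \in \Vcal$, including $v=1$ where $\sigma(b,0)=0$ by definition. Therefore, for each $v \in \Vcal \setminus \set{V}$,
\begin{align*}
  \bigl[r(v+1)+\sigma(b,v)\bigr] &- \bigl[r(v)+\sigma(b,v-1)\bigr]\\
  &= \bigl[r(v+1) - r(v)\bigr] + \delta(b,v).
\end{align*}
Under the first hypothesis, $\delta(b,v) \geq -[r(v+1)-r(v)]$, this difference is non-negative for every $v \in \Vcal \setminus \set{V}$, so $v \mapsto r(v)+\sigma(b,v-1)$ is non-decreasing on $\Vcal$. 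Composing with $g$ yields that $v \mapsto \mu(b,v)$ is non-decreasing. Under the second hypothesis, the same difference is non-positive, so $v \mapsto r(v)+\sigma(b,v-1)$ is non-increasing, and composing with the non-decreasing $g$ still gives $v \mapsto \mu(b,v)$ non-increasing.

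There is essentially no hard step here; the whole argument is a one-line telescoping difference combined with Proposition~\ref{prop:g}. The only thing to be careful about is the boundary $v=1$, where one must use the convention $\sigma(b,0)=0$ so that the forward-difference identity $\sigma(b,v)-\sigma(b,v-1)=\delta(b,v)$ is valid across all of $\Vcal \setminus \set{V}$; with that in hand, the hypothesis is exactly the sign condition needed on each consecutive difference.
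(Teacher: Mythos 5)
Your proof is correct and follows essentially the same route as the paper: invoke Proposition~\ref{prop:g} to write $\mu(b,v)=g(r(v)+\sigma(b,v-1))$ with $g$ non-decreasing, compute the consecutive difference $[r(v+1)+\sigma(b,v)]-[r(v)+\sigma(b,v-1)]=r(v+1)-r(v)+\delta(b,v)$, and read off the sign from the hypothesis. Your explicit handling of the $v=1$ boundary via $\sigma(b,0)=0$ is a minor, welcome addition but does not change the argument.
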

\begin{proof}
  Fix any $v \in \Vcal\setminus\set{V}$. By Proposition~\ref{prop:g}, 
  $\mu(b, v) = g(r(v) + \sigma(b, v-1))$ for some non-decreasing
  $g(\cdot)$. Therefore, $\mu(b, v+1) \geq \mu(b, v)$ if and only if
  $r(v + 1) + \sigma(b, v) \geq r(v) + \sigma(b, v - 1)$.
  \begin{align*}
    [r(v + 1) &+ \sigma(b, v)] - [r(v) + \sigma(b, v - 1)]\nonumber\\
    &= r(v + 1) - r(v) + [\sigma(b, v)  - \sigma(b, v-1)]\\
    &= r(v + 1) - r(v) + \delta(b, v)
  \end{align*}
  So if $\delta(b, v) \geq - [r(v + 1) - r(v)]$, then $\mu(b, v + 1)
  \geq \mu(b, v)$. If this holds for every $v \in
  \Vcal\setminus\set{V}$, then $v \mapsto \mu(b, v)$ is
  non-decreasing.

  The case for when $\delta(b, v) \leq - [r(v + 1) - r(v)]$ is
  analogous.
\end{proof}

When $r(v)$ is constant, we have an even simpler condition for testing
the monotonicity of $v \mapsto \mu(b, v)$. Taking $r(v)$ as a constant
can be used to model service time constraints; this was the case in
the wireless streaming model presented in \cite{Master_HOL_2014}. In
this case, $v \mapsto \mu(b, v)$ is always either non-decreasing or
non-increasing. A single algebraic condition can be verified to
determine which is the case.

\begin{theorem}
  Suppose $r(v) = \rbar > 0$ for all $v \in \Vcal$.  If
  $$h(b) + \min_{s \in \Scal} \{c(s) - s\rbar\} \geq 0$$
  then $v \mapsto \mu(b, v)$ is non-decreasing.  If 
  $$h(b) + \min_{s \in \Scal} \{c(s) - s\rbar\} \leq 0$$
  then $v \mapsto \mu(b, v)$ is non-increasing.  
\end{theorem}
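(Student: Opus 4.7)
The plan is to reduce to the preceding theorem. Since $r(v) \equiv \rbar$ gives $r(v+1) - r(v) = 0$, its sufficient condition for $v \mapsto \mu(b, v)$ to be non-decreasing (resp.\ non-increasing) collapses to requiring $\delta(b, v) \geq 0$ (resp.\ $\leq 0$) for every $v \in \Vcal \setminus \set{V}$. So the whole task is to propagate the sign of the single expression $h(b) + \min_{s \in \Scal}\{c(s) - s\rbar\}$ --- which on its face only concerns $v = 1$ --- to the sign of every $\delta(b, v)$.

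The key observation is that when $r(v) \equiv \rbar$, the map $T_{b, v}$ from Proposition~\ref{prop:sigma} is independent of $v$; call it $T_b$. It is non-decreasing by Proposition~\ref{prop:sigma}, and the recursion $\sigma(b, v) = T_b(\sigma(b, v-1))$ with $\sigma(b, 0) = 0$ gives
\begin{align*}
    \delta(b, 1) &= \sigma(b, 1) - \sigma(b, 0) = T_b(0) \\
    &= h(b) + \min_{s \in \Scal}\{c(s) - s\rbar\},
\end{align*}
so the theorem's hypothesis is exactly a statement about the sign of $\delta(b, 1)$.

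A short induction then does the rest. If $\delta(b, 1) \geq 0$, then $\sigma(b, 1) \geq \sigma(b, 0)$; and inductively, whenever $\sigma(b, v) \geq \sigma(b, v-1)$, applying the order-preserving $T_b$ to both sides yields $\sigma(b, v+1) \geq \sigma(b, v)$, i.e.\ $\delta(b, v+1) \geq 0$. Thus every $\delta(b, v)$ is non-negative and the previous theorem delivers the non-decreasing conclusion. The $\leq 0$ case is entirely symmetric: if $T_b(0) \leq 0$, the iterates of $T_b$ starting at $0$ are non-increasing in $v$, so every $\delta(b, v) \leq 0$, and the previous theorem again applies. I do not anticipate any real obstacle here: the constant-$r$ hypothesis turns the $\delta$ recursion into the iteration of a single monotone scalar map, for which the sign at the first step determines the sign forever.
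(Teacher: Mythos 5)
Your proposal is correct and follows essentially the same argument as the paper: both identify that constant $r$ makes $T_{b,v}$ a single monotone map $T_b$ with $\delta(b,1)=T_b(0)$ equal to the quantity in the hypothesis, and both propagate its sign through the iterates $\sigma(b,v)=T_b(\sigma(b,v-1))$ by induction. The only cosmetic difference is that you route the conclusion through the preceding theorem (the $\delta(b,v)\geq -[r(v+1)-r(v)]$ criterion) while the paper concludes directly from the monotonicity of $v\mapsto\sigma(b,v)$ and Proposition~\ref{prop:g}; these are the same content.
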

\begin{proof}
  Define $T_{b, \rbar}:\Rbb \goesto \Rbb$ as follows:
  \begin{equation*}
    T_{b, \rbar}(x) 
    = x + h(b) + \min_{s \in \Scal} \{c(s) - s[\rbar +x]\}
  \end{equation*}
  Note that because $r(v) = \rbar$, $T_{b, \rbar}(x) = T_{b, v}(x)$
  for all $x \in \Rbb$. We are interested in the sign of $T_{b,
    \rbar}(0)$.

  Assume that $T_{b, \rbar}(0) \geq 0$. We show that $v \mapsto
  \sigma(b, v)$ is non-decreasing by applying the principle of
  mathematical induction. Since $\mu(b, v) = g(\rbar + \sigma(b,
  v-1))$ for some non-decreasing $g(\cdot)$, the result follows. The
  case of $T_{b, \rbar}(0) \leq 0$ is analogous.

  By Proposition~\ref{prop:sigma}, $\sigma(b, 1) = T_{b, \rbar}(0)$ and
  $\sigma(b, v + 1) = T_{b, \rbar}(\sigma(b, v))$ for all $v \in
  \Vcal\setminus\set{V}$. Applying $T_{b, \rbar}$ to $\sigma(b, 1) =
  T_{b, \rbar}(0) \geq 0$ and using the monotonicity of $T_{b,
    \rbar}(\cdot)$ gives us that
  \begin{equation*}
    \sigma(b, 2) = T_{b, \rbar}(\sigma(b, 1)) \geq T_{b, \rbar}(0) = \sigma(b, 1).
  \end{equation*}
  Now assume that $\sigma(b, v) \geq \sigma(b, v - 1)$ for some $v \in
  \Vcal \setminus\set{1}$. Then applying $T_{b, \rbar}$ to $\sigma(b,
  v) = T_{b, \rbar}(\sigma(b, v-1))$ and using the monotonicity of
  $T_{b, \rbar}(\cdot)$ gives us that
  \begin{equation*}
    \sigma(b, v + 1) = T_{b, \rbar}(\sigma(b, v)) 
    \geq T_{b, \rbar}(\sigma(b, v - 1)) = \sigma(b, v).
  \end{equation*}
  So by induction, if $T_{b, \rbar}(0) \geq 0$ then $\sigma(b, v + 1)
  \geq \sigma(b, v)$ for all $v \in \Vcal\setminus\set{V}$ and hence,
  $v \mapsto \sigma(b, v)$ is non-decreasing.
\end{proof}

\section{Future Work\label{sec:future}}
The results in this paper suggest a number of future modeling
extensions. For instance, we could consider jobs which have different
reward functions. This would make $r(v)$ into $r(v, b)$. In addition,
jobs could have different initial values so that instead of $V$ we
have $V(b)$. This could potentially lead to notational complications
because for $b < b'$ we might have that $\mu(b, v)$ is defined but
$\mu(b', v)$ is not. Having the initial value vary with the job would
create ``holes'' in the state space which could make it cumbersome to
discuss how the optimal policy varies with the number of remaining
jobs. On the other hand, allowing for these modeling extensions would
give more general results.

A more significant modeling extension would be including job
arrivals. The proofs in this paper take advantage of the fact that the
number of jobs in the system decreases over time. While it is
reasonable to conjecture that there are similar monotonicity
properties when job arrivals are included, the proofs in this paper
would need substantial modification to account for these properties.

\section{Conclusion\label{sec:conclusion}}
In this paper we have modeled a system in which jobs are completed by
a single server while a controller dynamically adjusts the service
rate. The reward for each job completion decays during service. Costs are
incurred for holding jobs and for exerting service effort. This can be
used as an abstract model for applications in healthcare, information
technology, as well as perishable inventory control.

We show that when the holding cost is non-decreasing, the optimal
policy will be non-decreasing in the number of remaining jobs. We also
give algebraic conditions for determining and verifying the
monotonicity of the optimal policy as a function of the residual
value. When the reward for job completion is given by a step function,
these algebraic conditions collapse into a single inequality that can
be used to determine the monotonicity of the optimal policy.

\bibliographystyle{ieeetr}
\bibliography{NMaster_NBambos_ServiceRate_ValueDecay}

\end{document}